\def\qed{\hfill {\hbox{${\vcenter{\vbox{               
   \hrule height 0.4pt\hbox{\vrule width 0.4pt height 6pt
   \kern5pt\vrule width 0.4pt}\hrule height 0.4pt}}}$}}}
\def\utr{\ \underline{\triangleright}\ }
\def\otr{\ \overline{\triangleright}\ }
\newtheorem{theorem}{Theorem}
\newtheorem{lemma}[theorem]{Lemma}
\newtheorem{proposition}[theorem]{Proposition}
\theoremstyle{definition}
\newtheorem{example}{Example}
\newtheorem{definition}{Definition}
\newtheorem{remark}{Remark}
\date{}
\title{\Large \textbf{Parity Biquandle Invariants of Virtual Knots}}
\author{
Aaron Kaestner\footnote{Email: amkaestner@northpark.edu‎}\and 
Sam Nelson\footnote{Email: sam.nelson@cmc.edu. Partially supported by Simons Foundation collaboration grant 316709}\and
Leo Selker\footnote{Email: lselker13@gmail.com }}
\begin{document}
\maketitle

\begin{abstract} We define counting and cocycle enhancement invariants of 
virtual knots using parity biquandles. The cocycle invariants are determined by 
pairs consisting of a biquandle
$2$-cocycle $\phi^0$ and a map $\phi^1$ with certain compatibility conditions
leading to one-variable or two-variable polynomial invariants of virtual knots.
We provide examples to show that the parity cocycle invariants can distinguish
virtual knots which are not distinguished by the corresponding non-parity 
invariants.
\end{abstract}

\medskip
\parbox{5.5in} {\textsc{Keywords:} Virtual knots, parity biquandles, cocycle 
invariants, enhancements of counting invariants

\smallskip

\textsc{2010 MSC:} 57M27, 57M25}

\section{\large\textbf{Introduction}}\label{I}

In 1996, Louis Kauffman introduced the world to \textit{virtual knot theory} 
in \cite{K}. Virtual knot theory is a combinatorial generalization of the
theory of knotted curves in $\mathbb{R}^3$, now known as \textit{classical 
knot theory}. Each ambient isotopy class of knotted oriented curves in 
$\mathbb{R}^3$ coincides with an equivalence class of combinatorial objects
known as \textit{signed Gauss diagrams}; however, the set of all such 
equivalence classes includes classes which do not correspond to classical 
knots. These extra classes are known as \textit{virtual knots}.
Virtual knots can be understood geometrically as knots in certain 
3-manifolds ($\Sigma\times [0,1]$ for $\Sigma$ an orientable surface) up to
equivalence by stabilization of $\Sigma$ \cite{CKS2}. 

In \cite{K}, it is observed that every classical knot is represented by a 
diagram in which every crossing is \textit{evenly intersticed}, i.e.\ every 
crossing has an even number of over-- and under--crossing points along the knot
between its over and under instances. In virtual knots, a crossing can have an 
even or odd number of crossing points between its over and under instances, 
and moreover this even or odd \textit{parity} is not changed by Reidemeister 
moves. In \cite{M}, parity was used to to create a number of invariants for 
virtual knots. In \cite{D} the notion of parity was generalized to 
integer-valued maps and used to define new invariants of virtual knots.
Parity invariants are very good at distinguishing classical knots from
non-classical virtual knots as well as simply distinguishing virtual knot types.

In \cite{FRS} (and see also \cite{KR}), algebraic structures known as
\textit{biquandles} were introduced. Given any finite biquandle $X$, there is a
non-negative integer-valued invariant of oriented knots and links known as 
the \textit{biquandle counting invariant} which counts homomorphisms from the 
fundamental biquandle of a knot $K$ to $X$, represented as colorings
of the semiarcs of $K$ by elements of $X$. Cocycles in the second cohomology
of a finite biquandle were first used to enhance the biquandle counting 
invariant in \cite{CES}.

In \cite{KK}, biquandles incorporating the notion of parity were introduced
(see also \cite{A}). 
In this paper we extend the counting invariant to the case of finite parity 
biquandles and define enhancements of the counting invariant using 
\textit{parity enhanced cocycles}, cocycles in the second cohomology of the
even part of the parity biquandle with extra information analogous to the
virtual cocycles in \cite{CN}.

The paper is organized as follows. In Section \ref{B} we review the basics of 
biquandles. In Section \ref{PB} we review parity biquandles and introduce the parity biquandle counting invariant. In Section \ref{E} we review biquandle 
cohomology and define parity cocycle enhancements of the counting invariant.
We provide examples demonstrating that the parity enhanced cocycle invariants 
are stronger than the corresponding unenhanced cocycle invariants and the 
corresponding non-parity invariants for virtual
knots. In Section \ref{Q} we conclude with questions for future research.

\section{\large\textbf{Biquandles}}\label{B}

A biquandle is an algebraic structure with axioms motivated by the Reidemeister
moves (see \cite{FRS,KR} etc.). It can be defined abstractly:
\begin{definition}
A \textit{biquandle} is a set $X$ along with two operators, $\otr$ and $ \utr$, 
both maps $X \times X \mapsto X \times X$, such that:
\begin{enumerate}
\item[(i)] For all $x \in X, x \otr x = x \utr x$
\item[(ii)] We have right invertibility of both maps and pairwise invertibility, i.e.\ the maps $\alpha_y:x \mapsto x\otr y, \beta_y:x \mapsto x\utr y$, and 
$S: (x,y) \mapsto (y\otr x, x\utr y)$ are all invertible.
\item[(iii)] For all $x,y,z\in X$, we have the \textit{exchange laws}: 
\begin{align*}
(z \otr y) \otr (x \otr y) &= (z \otr x) \otr (y \utr x)\\
(x \otr y) \utr (z \otr y) &= (x \utr z) \otr (y \utr z)\\
(y \utr x) \utr (z \otr x) &= (y \utr z) \utr (x \utr z)
\end{align*}
\end{enumerate}

\end{definition}

\begin{example}
A well-known type of biquandle is the \textit{Alexander biquandle}. The 
biquandle's underlying set $X$ is a module over the ring 
$\Lambda=\mathbb{Z}[t^{\pm 1}, s^{\pm 1}]$ of two-variable Laurent polynomials.
In particular, note that $s$ and $t$ are invertible, so for Alexander 
biquandles structures on finite rings or fields (where $s$ and $t$ are 
elements of the ring), the characteristic must be relatively 
prime to $s$ and $t$. The operations are defined as: 
\begin{align*}
x \utr y &= tx + (s^{-1}-t)y\\
x \otr y &= s^{-1}y
\end{align*}
The first biquandle axiom follows from the definition, the second follows from 
the fact that $s$ and $t$ are invertible, and the exchange laws can be easily 
checked.
\end{example}

\begin{example}
Given a finite set $X=\{x_1,\dots, x_n\}$, we can define biquandle structures 
on $X$ by encoding the operation tables of $\utr$ and $\otr$ as blocks in 
a matrix. For example, the set $X=\{x_1,x_2,x_3\}$ is a biquandle with 
operations defined by the operation tables
\[
\begin{array}{r|rrr}
\utr & x_1 & x_2 & x_3 \\ \hline
x_1 & x_1 & x_3 & x_2 \\
x_2 & x_3 & x_2 & x_1 \\
x_3 & x_2 & x_1 & x_3
\end{array}
\quad\mathrm{and}\quad
\begin{array}{r|rrr}
\utr & x_1 & x_2 & x_3 \\ \hline
x_1 & x_1 & x_1 & x_1 \\
x_2 & x_2 & x_2 & x_2 \\
x_3 & x_3 & x_3 & x_3
\end{array}
\]
which we abbreviate by dropping the ``$x$''s
to obtain the biquandle matrix
\[\left[\begin{array}{rrr|rrr}
1 & 3 & 2 & 1 & 1 & 1 \\
3 & 2 & 1 & 2 & 2 & 2 \\
2 & 1 & 3 & 3 & 3 & 3
\end{array}\right].\]
The left-hand block represents the operation $\utr$ while the right-hand block
represents the operation $\otr$; then for instance
we have $x_2\utr x_3=x_1$ and $x_3\otr x_1=x_3$, obtained by looking up the 
entries in row 2 column 3 and row 3 column 1 of the left and right blocks
respectively.
\end{example}

To construct knot invariants using finite biquandles, we assign a biquandle 
element to each semi-arc of a knot such that the pictured \textit{coloring 
condition:}
\[\includegraphics{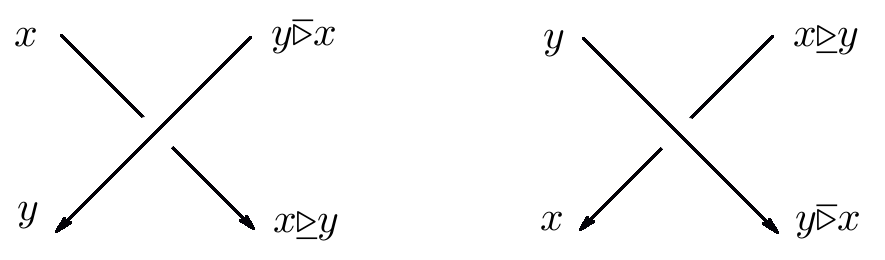}\]
is satisfied at every crossing.
This is sometimes called ``coloring'' the knot, and can be understood as a 
homomorphism from the \textit{fundamental biquandle} of the knot to the coloring biquandle. (The fundamental biquandle of a knot $K$, denoted $\mathcal{B}(K)$, is the set of equivalence classes of  biquandle words generated by
semiarcs in a diagram of $K$ modulo the crossing relations and  biquandle
operations). As in 
the picture, if a crossing is oriented downward as shown, the 
colorings of the two left-hand semi-arcs together with the two operators 
determine the colorings of the two right-hand semi-arcs.
In particular, we may interpret $x \utr y$ as $x$ after going under $y$, and 
$x \otr y$ as $x$ after going over $y$. Given a particular coloring biquandle 
and a particular knot, each crossing yields a constraint on the possible 
colorings of the knot with that biquandle.

Indeed, the biquandle axioms are chosen precisely to guarantee that for 
any biquandle coloring of a knot or link diagram before a Reidemeister
move, there is a unique corresponding coloring after the move:
\[\includegraphics{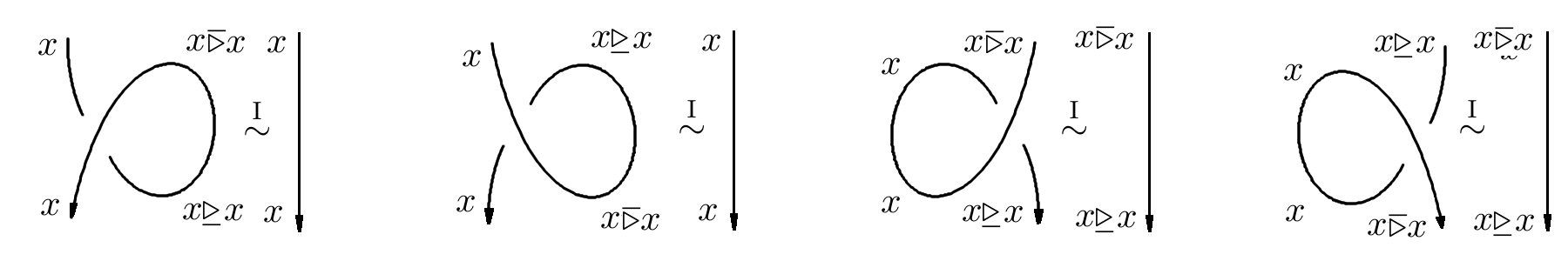}\]
\[\scalebox{0.9}{\includegraphics{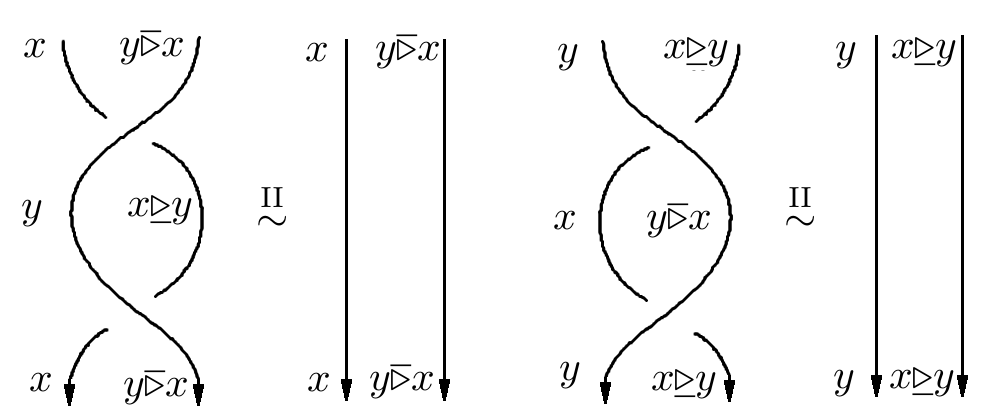}\
\includegraphics{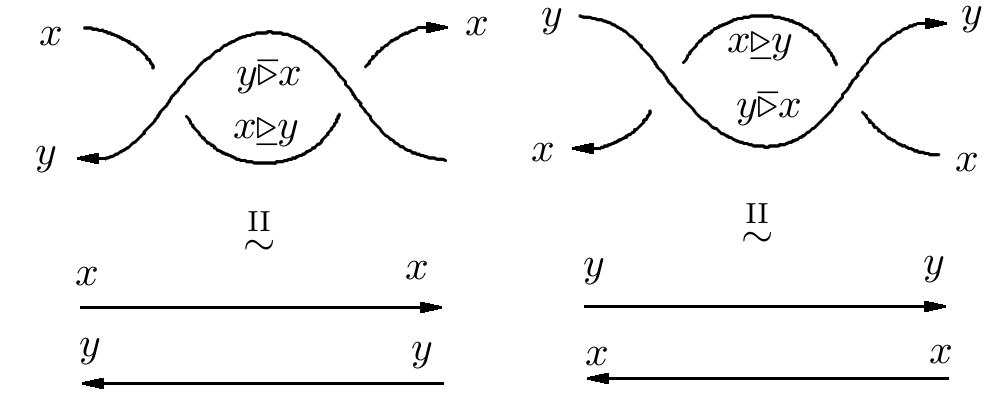}}\]
\[\includegraphics{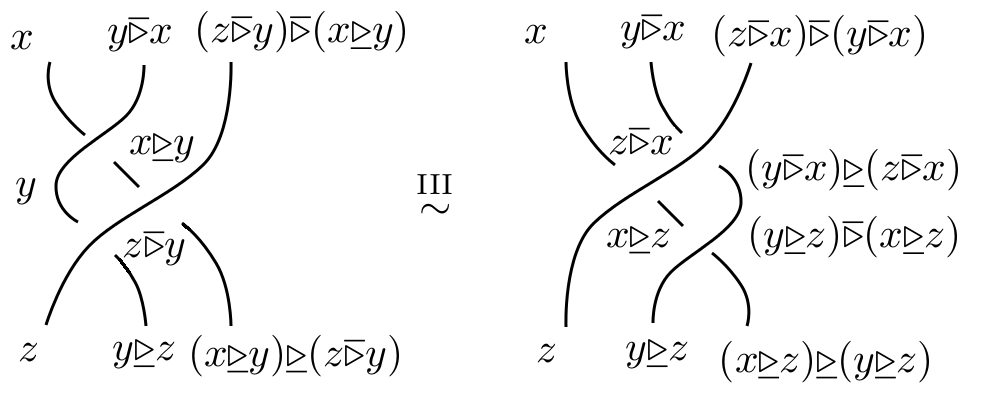}\]

Axiom 1 ensures that the semiarc created by a Reidemeister I move has a 
well-defined coloring, and Axiom 2 ensures a 1-1 correspondence between 
colorings before and after the move. The three invertibility conditions of 
Axiom 2 ensure 1-1 correspondences in different orientations of Reidemeister 
II moves. The exchange laws from Axiom 3 follow from the boundary conditions 
and multiple colorings in a Reidemeister III move. Note that Reidemeister I 
and II moves allow us to move between all the forms of the Reidemeister III 
move, permitting us to consider only the case with all three crossings positive.

Note that it is common elsewhere in the literature to define biquandle 
operations with the inbound oriented semiarcs operating on each other to 
produce the 
outbound oriented semiarcs, which we might call ``top-down'' operations; 
however, the 
``sideways'' operations are historically first (see \cite{FRS}) and have the 
advantages of resulting in more symmetric axioms and making biquandle 
homology much simpler.

We want to use biquandles to define an invariant based on assigning elements of 
a biquandle to knot semi-arcs.
Given a particular biquandle $X$, we have a counting invariant, 
which is the number of ways of assigning biquandle elements to a given knot. 

\begin{definition}
Let $X$ be a finite biquandle and $K$ an oriented knot. The \textit{biquandle
counting invariant} is the number of biquandle colorings of $K$ by $X$,
\[\Phi_X^{\mathbb{Z}}(L)=|\mathrm{Hom}(\mathcal{B}(K),X)|.\]
\end{definition}
As outlined above, every valid biquandle
labeling of a knot diagram before a Reidemeister move corresponds to a unique
valid biquandle labeling of the diagram after the move. This ensures that the above is indeed a knot invariant. 
\begin{theorem}
For any finite biquandle $X$, the corresponding biquandle counting invariant is knot invariant.
\end{theorem}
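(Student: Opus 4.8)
The plan is to reduce the statement to a local, move-by-move verification. By the Reidemeister theorem (in both its classical form and its extension to virtual diagrams via the generalized Reidemeister moves), any two diagrams representing the same knot $K$ are connected by a finite sequence of Reidemeister moves together with planar isotopy. Since $\Phi_X^{\mathbb{Z}}(K)=|\mathrm{Hom}(\mathcal{B}(K),X)|$ is by definition the number of colorings of a diagram, it suffices to show that the number of valid $X$-colorings is unchanged by each single Reidemeister move; the general case then follows by composing the resulting equalities along the sequence.

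To prove invariance under a single move, I would exhibit an explicit bijection between the set of colorings of the diagram before the move and the set of colorings afterward. Each move is supported in a small disk $D$, and the two diagrams agree outside $D$; in particular the semiarcs crossing $\partial D$ are identical and receive the same colors on both sides. The strategy is to fix such a boundary coloring and argue that it extends in exactly one way to a valid coloring of the portion of the diagram inside $D$, both before and after the move. Concretely, Axiom (i) guarantees that the new semiarc produced by a Reidemeister I move is forced to carry the unique color $x$ with $x\otr x=x\utr x$, so the extension exists and is unique; the right-invertibility of $\alpha_y,\beta_y$ together with the invertibility of $S$ from Axiom (ii) supply the corresponding unique extensions for the several oriented variants of the Reidemeister II move; and the three exchange laws of Axiom (iii) force the interior colors in a Reidemeister III move to agree on the two sides. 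Matching boundary colorings to interior extensions then yields the desired cardinality-preserving bijection.

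The main obstacle is the Reidemeister III check, since it is the move for which the interior coloring is genuinely constrained by several relations at once and must be shown consistent on both sides. As noted in the discussion preceding the statement, this is exactly the content of the three exchange laws, and one may use the Reidemeister I and II moves (already handled) to pass between the various oriented and signed forms of the Reidemeister III move, so that only the all-positive configuration need be verified directly. For the virtual setting I would additionally observe that virtual crossings carry no biquandle operation, so the virtual and mixed (detour) moves simply transport colors along the affected strands and induce the identity on the set of colorings; hence they trivially preserve the count. Composing these per-move bijections along any sequence connecting two diagrams of $K$ shows that $\Phi_X^{\mathbb{Z}}(K)$ is independent of the chosen diagram, which is precisely the assertion that it is a knot invariant.
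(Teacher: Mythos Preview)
Your proposal is correct and follows essentially the same argument as the paper: the paper's proof is the discussion immediately preceding the theorem, which establishes that each biquandle axiom furnishes the required bijection of colorings under the corresponding Reidemeister move (Axiom (i) for type I, the invertibility conditions of Axiom (ii) for the oriented type II variants, and the exchange laws of Axiom (iii) for the all-positive type III, with the other type III cases reduced to this one via types I and II). Your additional remark that virtual and detour moves act trivially on colorings is a welcome elaboration but does not depart from the paper's method.
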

\section{\large\textbf{Parity Biquandles}}\label{PB}

To refine the biquandle counting invariant, we use an extension of the 
biquandle, the \textit{parity biquandle.} First defined in \cite{KK}, 
parity biquandles are similar to biquandles, but 
with four operations instead of just two: $\otr^0, \otr^1, \utr^0, \utr^1$, 
and some additional restrictions.
\begin{definition}
A \textit{parity biquandle} is a set $X$ along with four operations: 
$\otr^0. \otr^1, \utr^0, \utr^1$, all maps $X \times X \mapsto X \times X$, 
such that:
\begin{enumerate}
\item[(i)] $X$ along with the two operations $\otr^0$ and $\utr^0$ is a 
biquandle ($X$ along with $\otr^1$ and $\utr^1$ need not be).
\item[(ii)] We have right invertibility of both odd maps and pairwise 
invertibility of those maps, i.e.\ the maps 
$\alpha_y^1:x \mapsto x\otr^1 y, \beta_y^1:x \mapsto x\utr^1 y$, and 
$S: (x,y) \mapsto (y\otr^1 x, x\utr^1 y)$ are all invertible.
\item[(iii)] We have the \textit{mixed exchange laws}: 
\begin{align*}
(z \otr^a y) \otr^b (x \otr^c y) &= (z \otr^b x) \otr^a (y \utr^c x)\\
(x \otr^a y) \utr^b (z \otr^c y) &= (x \utr^b z) \otr^a (y \utr^c z)\\
(y \utr^a x) \utr^b (z \otr^c x) &= (y \utr^b z) \utr^a (x \utr^c z)
\end{align*}
for $(a,b,c) \in \{(0,1,1),(1,0,1),(1,1,0)\}$ 
(The case where $a=b=c=0$ must hold, but is enforced by condition 1).
\end{enumerate}
Note that biquandles are the special case of parity biquandles where 
$x \utr^1 y = x \utr^0 y$, and $x \otr^1 y = x \otr^0 y$. 
\end{definition}

For coloring virtual knots with parity biquandles, we will use the following
definition (see also \cite{KK}):

\begin{definition}
Let $K$ be a virtual knot diagram and let $C$ be a classical crossing in $K$.
We will say $C$ has \textit{parity} $0$ if the number of classical over and 
under crossings encountered traveling along $K$ between the under and over 
instances of $C$ is even, and we will say $C$ has \textit{parity} $1$ if the 
number of classical over and under crossings encountered traveling along $K$ 
between the under and over instances of $C$ is odd.
\end{definition}

Parity biquandles can be used to capture additional structure in virtual knots 
by using operators based on the parity of each crossing, with the $1$ 
superscript for odd crossings and the $0$ superscript for even ones. 
We will occasionally find it convenient to decorate each 
crossing with a $0$ or a $1$ to explicitly indicate its parity.

Given a parity biquandle $X$, we color the semiarcs of a virtual knot diagram
$D$ with elements of $X$, with constraints generated by the even operations 
$\utr^0,\otr^0$ at even crossings and the odd 
operations $\utr^1,\otr^1$ at the odd crossings.
\[\includegraphics{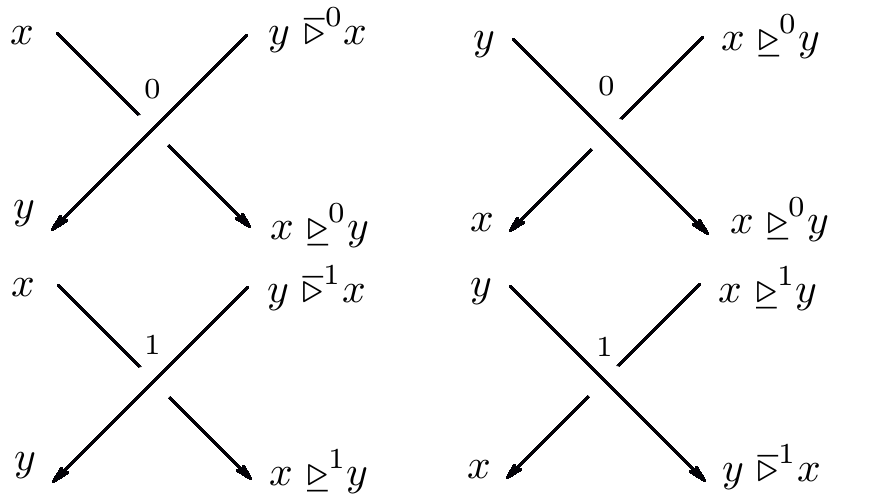}\]


\begin{remark}
Note that parity biquandle colorings only differ from biquandle colorings in 
non-classical virtual knots, since classical knots have only even 
crossings. This makes the odd operators irrelevant for classical knots.
\end{remark}

As with a biquandle, given a parity biquandle $X$, we can find the number of 
ways of assigning elements of $X$ to semi-arcs of a knot $K$, or ``coloring'' 
$K$, respecting the parity biquandle's relations. As before, these colorings can be defined as homomorphisms from $\mathcal{PB}(K)$, the \textit{fundamental parity biquandle} of
$K$, to the coloring biquandle $X$. (The fundamental parity biquandle is the set of equivalence classes of parity biquandle words generated by
semiarcs in a diagram of $K$ modulo the crossing relations and parity biquandle
operations.)
\begin{definition}
The \textit{parity biquandle counting invariant} is defined by
\[\Phi_X^{\mathbb{Z}}(K)=|\mathrm{Hom}(\mathcal{PB}(K),X)|\]
\end{definition}

To show 
that this is indeed an invariant, we must show that a single coloring before 
a Reidemeister move implies a unique coloring after the move. To do this, we 
look at the moves one at a time:

For type I moves, the crossing involved in the move is always even, so the 
biquandle condition on the even-crossing operators forces invariance.

For type II moves, we note that the signs of both crossings must be the same 
(by inspection of Gauss diagram). If both are even, invariance is forced by 
the biquandle condition on the even-crossing operators. If both are odd, 
invariance is forced by the invertibility rules from the definition.

For type III moves, the biquandle condition forces the all-even exchange laws. 
This forces invariance under the all-even Reidemeister III move. To show that 
the mixed exchange laws capture all the other cases, we need a lemma:

\begin{lemma}
In a Reidemeister III move, either all three crossings involved are even, or 
two are odd and one is even.
\label{lemma:r3par}
\end{lemma}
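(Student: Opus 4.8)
The plan is to reduce the statement to a parity computation on the three crossings of a Reidemeister III move. Recall that the parity of a classical crossing $C$ is determined by the number of over/under crossing instances encountered between the under and over passes of $C$, taken mod $2$; equivalently, parity is a $\mathbb{Z}/2\mathbb{Z}$-valued quantity. The key structural fact I would exploit is that this parity admits a description in terms of intersection data on the Gauss diagram: the parity of a crossing equals the mod-$2$ count of the other crossings whose chord is \emph{linked} with (i.e.\ interleaves) the chord of $C$, together with whatever contribution the move's own chords make. So first I would set up coordinates: in a Reidemeister III configuration the three strands pairwise cross, producing three classical crossings, call them $C_{12}, C_{13}, C_{23}$, each an intersection of two of the three strands.

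Next I would observe that crossings \emph{outside} the local disk of the move are identical before and after the move, and contribute the same parity-offset to each of the three crossings regardless of the move; since parity is a mod-$2$ sum, I can split each crossing's parity into an ``external'' contribution (constant across the three, or at least unaffected by the mutual linking inside the disk) and an ``internal'' contribution coming from how the three chords of the move interleave each other on the Gauss diagram. The content of the lemma is then entirely about the internal contributions: I would write down, for the Gauss diagram of the three mutually crossing strands, the $3\times 3$ table recording for each ordered pair whether the chord of one crossing links the chord of another, and compute the sum of the internal parities of $C_{12}, C_{13}, C_{23}$ mod $2$.

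The crux is to show that this internal sum is forced to be even, i.e.\ that $\mathrm{par}(C_{12})+\mathrm{par}(C_{13})+\mathrm{par}(C_{23})\equiv 0 \pmod 2$, since that congruence is exactly equivalent to the allowed parity patterns being $(0,0,0)$ or a permutation of $(0,1,1)$ and rules out $(1,0,0)$-type patterns and $(1,1,1)$. To prove the congruence I would use the linking/interleaving symmetry: on a Gauss diagram, ``chord $a$ links chord $b$'' is a symmetric relation, and when we sum the parities of the three move-crossings each \emph{pairwise} linking contribution between two of the move's own chords gets counted exactly twice (once in each of the two crossings it affects), hence cancels mod $2$; the external contributions from a fixed chord are the same whether that chord is linked from the over-side or under-side in a way that pairs up and cancels across the three crossings. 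I expect the main obstacle to be the careful bookkeeping of which crossing-instances lie between the under and over passes of each of the three crossings, i.e.\ correctly identifying the internal interleaving pattern of the three chords, because the three strands of a Reidemeister III move can be arranged with different orientations and the disk can be embedded in the diagram in several ways; I would handle this by reducing to the standard all-positive Reidemeister III configuration (justified in the preceding discussion, since Reidemeister I and II moves let us pass between the variants) and then checking the single resulting chord configuration directly.

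Finally, having established $\mathrm{par}(C_{12})+\mathrm{par}(C_{13})+\mathrm{par}(C_{23})\equiv 0 \pmod 2$, I would conclude by enumerating the mod-$2$ solutions of this equation among the eight possible parity triples: the only triples with even sum are $(0,0,0)$, $(0,1,1)$, $(1,0,1)$, and $(1,1,0)$. The first is the all-even case and the remaining three are exactly the ``two odd, one even'' cases, which is precisely the dichotomy asserted by the lemma.
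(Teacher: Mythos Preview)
Your overall strategy---reduce the lemma to the congruence $\mathrm{par}(C_{12})+\mathrm{par}(C_{13})+\mathrm{par}(C_{23})\equiv 0\pmod 2$ and then enumerate---is sound and equivalent to what the paper proves. The paper's argument, however, is organized more directly and avoids your internal/external split. It observes that in the Gauss diagram of an R3 configuration the six endpoints of the three move-chords come in three adjacent pairs, so the remaining (external) crossing endpoints fall into three complementary arcs; for each move-chord, the arc it subtends contains exactly one of these three ``minor segments'' together with an even number of internal endpoints, so its parity equals that segment's endpoint count mod $2$. Since the three segment counts sum to the total number of external endpoints, which is even, the three parities sum to $0$ mod $2$.

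Your argument reaches the same place but with a rough patch. The internal half (pairwise linkings among the three move-chords are symmetric, hence counted twice) is clean and correct. The external half is where you should be careful: it is \emph{not} true that the external contribution is ``constant across the three'' crossings, and the phrase about contributions ``pairing up'' is not a proof---an arbitrary external chord can link an odd number of three arbitrary chords. What makes it work here is precisely the R3 Gauss-diagram geometry: both endpoints of any external chord must lie in the three gap arcs, and a short case check shows such a chord links either $0$ or $2$ of the move-chords. You do say you would reduce to the standard configuration and check directly, which is exactly what is needed; just be aware that this check \emph{is} the content of the external step, not an afterthought. Once you do it, your argument and the paper's are really the same computation viewed through linking numbers versus arc counts.
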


\begin{proof} 
Consider the possible Gauss diagrams that might start a type III move. 
Below we have diagrams of all the possible starting positions for the crossings involved in the move. (For more about Gauss diagrams, see for instance 
\cite{GPV}).
We can divide the crossing labels not involved in the move into three sections
(i.e., the dotted portions of the outer circle), based on their 
``minor segment'' in the diagrams below. 
\[\scalebox{0.5}{\includegraphics{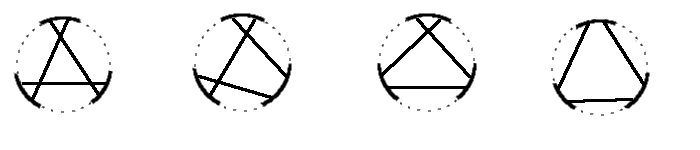}}\]
This total will be even, so either all three partitions are even or two are odd and one is even. Each minor segment's 
parity determines the parity of the crossing corresponding with that segment's 
chord. So the crossing parities must follow the same pattern: either all are even, or 
one is even and two odd.
\end{proof}

Given Lemma \ref{lemma:r3par}, it is clear that the mixed exchange laws 
encompass all of the remaining cases. 

We can construct examples to show that all of the cases not ruled out by 
Lemma \ref{lemma:r3par} are in fact possible. This is why all three cases of 
the mixed exchange laws are necessary. The figure below shows three knots 
where a Reidemeister III move is possible, each with the relevant crossing 
parities labeled. 
\[\includegraphics{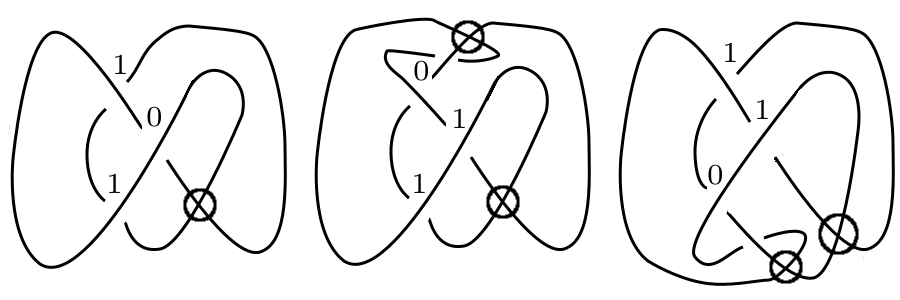}\]

We now have our desired result.
\begin{theorem}
For any finite parity biquandle $X$, the corresponding parity biquandle counting invariant is a knot invariant.
\end{theorem}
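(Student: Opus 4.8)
The plan is to prove invariance by showing that the number of valid parity biquandle colorings is unchanged under every generalized Reidemeister move for virtual knots: the three classical moves, the purely virtual moves, and the mixed (detour) move. Since $\Phi_X^{\mathbb{Z}}(K)=|\mathrm{Hom}(\mathcal{PB}(K),X)|$ counts assignments of elements of $X$ to the semiarcs of a diagram satisfying the crossing constraints, it suffices to exhibit, for each move, a bijection between colorings of the diagram before the move and colorings after, holding fixed the colors on the boundary of the disk in which the move occurs. Because each such local bijection respects the outside coloring, composing over a sequence of moves shows the total count depends only on the virtual knot type.

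I would dispose of the virtual moves first. A virtual crossing imposes no relation beyond letting the two strands exchange labels, so any coloring outside the move disk extends across virtual crossings and the purely virtual and detour moves preserve the boundary colors; thus colorings correspond one-to-one. One must also check that parity labels are unaffected: the parity of a classical crossing counts only classical over/under passes between its two instances, and virtual moves neither create nor remove classical crossings along any arc, so every crossing retains its parity and hence its pair of operators.

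For the classical moves I would use the parity-invariance recorded in the introduction together with the four axioms. For a type I kink the crossing is always even, so axiom (i) makes the newly created semiarc well-colored and axiom (ii) gives the bijection, exactly as for an ordinary biquandle applied to the even substructure $(X,\otr^0,\utr^0)$. For a type II move the two crossings share a sign, and by the Gauss-diagram inspection noted in the text they share a parity; the both-even case reduces to the even biquandle, while the both-odd case follows from the right- and pairwise-invertibility of the odd maps $\alpha_y^1,\beta_y^1,S$ in axiom (ii). For a type III move I invoke Lemma \ref{lemma:r3par}: the crossings are either all even, in which case the exchange laws of the even biquandle match the colorings on the two sides, or exactly two are odd and one is even, in which case the parity labels realize precisely the patterns $(a,b,c)\in\{(0,1,1),(1,0,1),(1,1,0)\}$ appearing in axiom (iii), so the mixed exchange laws supply the required identities.

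I expect the type III analysis to be the main obstacle. A priori a Reidemeister III move could present many parity patterns and orientation normalizations, and the content of Lemma \ref{lemma:r3par} is exactly that it cuts this down to the finite list the mixed exchange laws were designed to handle. As remarked in the text, reducing to the all-positive-crossing version of the move via type I and II moves lets me work with a single canonical diagram, so the remaining task is the bookkeeping of checking that each admissible placement of the lone even crossing corresponds to one of the three axiom cases — routine once the lemma is in hand, but the step where care is genuinely needed.
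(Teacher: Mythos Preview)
Your proposal is correct and follows essentially the same route as the paper: verify each Reidemeister move type using the corresponding parity biquandle axiom, with Lemma~\ref{lemma:r3par} restricting the type~III parity patterns to exactly those covered by the mixed exchange laws. You additionally make explicit the (standard) check that virtual and detour moves preserve both colorings and classical-crossing parities, which the paper leaves implicit; one small slip is that the two crossings in a type~II move have \emph{opposite} signs (but equal parity), though this does not affect the argument.
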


\begin{example}
As an example of a parity biquandle structure, we construct a family of 
parity biquandles by extending the Alexander biquandle structure. 
An \textit{Alexander parity biquandle} is a module $X$ over the 
ring $\Lambda=\mathbb{Z}[t^{\pm 1}, s^{\pm 1}, b^{\pm 1},a^{\pm 1}]$ of four-variable
Laurent polynomials (i.e.\ with all four variables invertible in the 
coefficient ring). The following constraints must also be satisfied:

\begin{align*}
(a^{-1}-b)^2+(s^{-1}-t)(b-a^{-1})&=0\\
(a^{-1}-b)(b-t)&=0\\
(a^{-1}-b)(s^{-1}-a^{-1})&=0
\end{align*}

The operators are defined as:
\begin{align*}
x \utr^0 y &= tx + (s^{-1}-t)y\\
x \otr^0 y &= s^{-1}y\\
x \utr^1 y &= bx + (a^{-1}-b)y\\
x \otr^1 y &= a^{-1}y
\end{align*}

The first two parity biquandle axioms follow from the definition and the 
(mixed) exchange laws follow from the constraints.
\end{example}

\begin{example}
Given a finite set $X=\{x_1,\dots,x_n\}$ we can define parity biquandle 
structures on $X$ with a $2n\times 2n$ block matrix $M$ encoding the operation
tables of the even and odd operations such that 
$x_i \utr^{\epsilon} x_j=M_{i,j+\epsilon n}$ and 
$x_i \otr^{\epsilon} x_j=M_{i+n,j+\epsilon n}$, where $\epsilon \in \{0,1\}$ and 
$M_{i,j}$ is the entry of $M$ in row $i$ column $j$. That is, we will encode 
the operations tables as a block matrix whose blocks are the operation tables 
of the operations arranged  as
$\left[\begin{array}{c|c}
\utr^0 & \utr^1 \\ \hline
\otr^0 & \otr^1 \\
\end{array}\right].$
For example, the set $X=\{x_1,x_2,x_3\}$ has parity biquandle structures 
including
\[\left[\begin{array}{rrr|rrr}
3 & 1 & 3 & 3 & 1 & 3 \\
2 & 2 & 2 & 2 & 2 & 2 \\
1 & 3 & 1 & 1 & 3 & 1 \\ \hline
1 & 3 & 1 & 3 & 3 & 3 \\
2 & 2 & 2 & 2 & 2 & 2 \\
3 & 1 & 3 & 1 & 1 & 1
\end{array}\right].\]
Then in this case, we have $3\utr^01=1$ and $1\otr^1 2=3$.
\end{example}

\section{\large\textbf{Parity Cocycle Enhancements}}\label{E}

We begin this section with a brief review of biquandle homology; see 
\cite{CES,CN,CEGN}, etc., for more.

Let $X$ be a finite biquandle and $A$ an abelian group. Define 
$C_n(X;A)=A[X^n]$, the free $A$-module on ordered $n$-tuples of elements of $X$.
For each $n=1,2,3,\dots$, define $\partial_n:C_n(X)\to C_{n-1}(X)$ by setting
\[\partial_n(x_1,\dots,x_n)
=\sum_{k=1}^n (-1)^k(\partial^0_{n,k}(x_1,\dots, x_n)-\partial^1_{n,k}(x_1,\dots,x_n))\]
where
\begin{eqnarray*}
\partial^0_{n,k}(x_1,\dots,x_n)& = & (x_1,\dots,x_{k-1},\ x_{k+1},\dots, x_n) \\
\partial^1_{n,k}(x_1,\dots,x_n)& = & (x_1\utr x_k,\dots, x_{k-1}\utr x_k,\ x_{k+1}\otr x_k,\dots, x_n\otr x_k)
\end{eqnarray*}
and extending linearly.

Then (see \cite{CEGN} for example) $\partial$ is a boundary map; the 
$A$-modules $H_n(X;A)=\mathrm{Ker}(\partial_n)/\mathrm{Im}(\partial_{n+1})$ and
$H^n(X;A)=\mathrm{Ker}(d^{n+1})/\mathrm{Im}(d^n)$ (where 
$d^n(f(x))=f(\partial_n(x))$ for any $f:C_n(X)\to A$, i.e.\ for any 
$f \in C^n$) are the \textit{$n$th birack 
homology and cohomology modules with coefficients in $A$} respectively. (A 
\textit{birack} is like a biquandle but without the conditions resulting
from the type I move replaced with conditions arising from the framed type I 
move; see for instance \cite{NR} for more).
 A birack
cocycle which evaluates to zero on all \textit{degenerate chains} ($A$-linear
combinations of generators $(x_1,\dots, x_n)$ with $x_k=x_{k+1}$ for some
$k=1,\dots, n-1$) is a \textit{reduced cocycle}.

In \cite{CES,CN} and more, biquandle $2$-cocycles are used to define
enhancements of the biquandle counting invariant for finite biquandles. 
Specifically, for any $X$-colored knot or link diagram, a reduced
2-cocycle $\phi$
is evaluated on the colors at each crossing; the algebraic sum of these
cocycle values (i.e.\ $+\phi(x,y)$ at positive crossings and $-\phi(x,y)$
at negative crossings, where $x,y$ are the under- and over-crossing colors
at the crossing), known as a \textit{Boltzmann weight}, is then invariant 
under $X$-labeled Reidemeister moves. Then the multiset
of Boltzmann weights over the set of $X$-colorings of $L$ is an enhanced
invariant of $L$ with cardinality equal to the $X$-counting invariant. It is
common to rewrite this multiset as a polynomial by taking the 
generating function of the multiset, i.e.\ converting multiplicities to 
integer coefficients and multiset elements to exponents of a dummy variable $u$.

To incorporate parity, we define the notion of a \textit{parity enhanced 
biquandle $2$--cocycle}, which is a reduced $2$--cocycle $\phi^0\in C^2(X;A)$ 
paired with another function $\phi^1:X\times X\to A$ satisfying certain
compatibility conditions with $\phi^0$. Essentially, the idea is to  evaluate
$\phi^0$ at even crossings and $\phi^1$ at odd crossings. 
\[\includegraphics{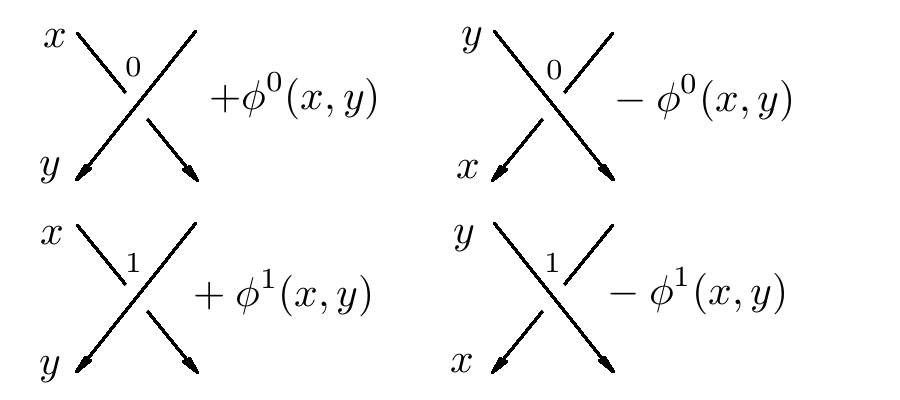}\]
Looking at the Reidemeister III move, we have
\[\includegraphics{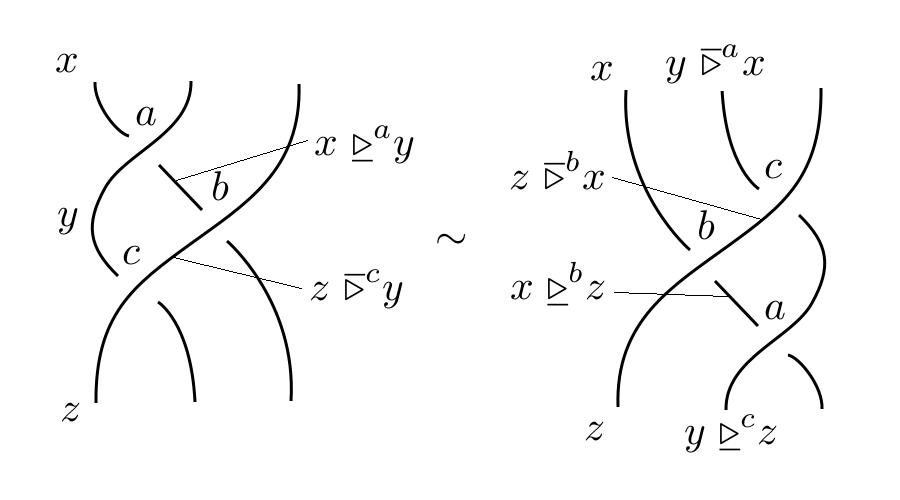}\]
and hence we need 
\[\phi^a(x,y)+\phi^b(x\utr^a y,x\otr^c y)+\phi^c(y,z)=
\phi^a(x\utr^bz,y\utr^cz)+\phi^b(x,z)+\phi^c(y\otr^a x,z\otr^b x)\]
for triples $(a,b,c)\in\{(0,0,0),(1,1,0),(1,0,1),(0,1,1)\}$. 

Analogously to \cite{CN}, we can define two forms of compatibility between
$\phi^0$ and $\phi^1$:

\begin{definition}
Let $X$ be a finite biquandle, $A$ an abelian group and 
$\phi^0,\phi^1:A[X^2]\to A$ linear maps. We say $\phi^0$ and $\phi^1$
are \textit{compatible} if for all $x,y,z\in X$ and for all
$(a,b,c)\in\{(1,1,0),(1,0,1),(0,1,1)\}$ we have
\[\phi^a(x,y)+\phi^b(x\utr^a y,x\otr^c y)+\phi^c(y,z)=
\phi^a(x\utr^bz,y\utr^cz)+\phi^b(x,z)+\phi^c(y\otr^a x,z\otr^b x).\]
We say $\phi^0$ and $\phi^1$ are \textit{strongly compatible} if 
$\phi^0$ and $\phi^1$ are compatible and we additionally have
\begin{eqnarray*}
\phi^0(y,z) &= & \phi^0(y\otr^1 x,z\otr^1 x),\\
\phi^0(x,z)&= &\phi^0(x\utr^1 y,z\otr^1 y)\mathrm{,\ and} \\
\phi^0(x,y) &= & \phi^0(x\utr^1z,y\utr^1z)
\end{eqnarray*}
for all $x,y,z\in X$.
\end{definition}

The compatibility condition together with the 2-cocycle condition for $\phi^0$
guarantees that the sum $\phi^j(x,y)$ of contributions from $\phi^0$ and 
$\phi^1$ at even and odd crossings is not changed by Reidemeister III moves.
The strong compatibility condition guarantees that the separate
contributions from even and odd crossings are preserved by type III moves. 
The contribution rules guarantee that (even or odd) type II moves do not
change the total contribution, and the reduced condition for $\phi^0$ 
guarantees
that type I moves do not change the overall sum of crossing weights. Thus, 
we have:

\begin{definition}
Let $X$ be a finite parity biquandle, $K$ a virtual knot, $A$ an 
abelian group and $\phi^0\in H^2(X;A)$ a reduced biquandle $2$-cocycle, and
$\phi^1:A[X^2]\to A$ a linear map compatible with $\phi^0$. 
For each $f\in\mathrm{Hom}(\mathcal{PB}(K),X)$, the \textit{parity Boltzmann 
weight} of $f$ is the sum
\[BW(f)=\sum_{c\ \mathrm{crossings}} \sigma(c)\phi^{\epsilon(c)}(x_c,y_c)\]
where $\sigma(c)=\pm 1$ is the sign of the crossing, $\epsilon(c)\in\{0,1\}$ 
is the 
parity of the crossing, and $(x_c,y_c)$ are the left side under- and over-crossing
labels. If $\phi^0$ and $\phi^1$ are strongly compatible, the \textit{strong 
parity Boltzmann weight} of $f$ is
\[SBW(f)=(SBW(f)_0,SBW(f)_1)
=\left(\sum_{\mathrm{even\ crossings}} \sigma(c)\phi^0(x_c,y_c),\
\sum_{\mathrm{odd\ crossings}} \sigma(c)\phi^1(x_c,y_c)\right).\]
Then the \textit{parity enhanced biquandle cocycle multiset} of $K$ is
the multiset
\[\Phi^{\phi,M}_X(K)=\{BW(f)\ |\ f\in\mathrm{Hom}(\mathcal{PB}(K),X)\}\]
or
\[\Phi^{\phi,sM}_X(K)=\{SBW(f)\ |\ f\in\mathrm{Hom}(\mathcal{PB}(K),X)\}\]
in the strongly compatible case.
The \textit{parity enhanced biquandle cocycle polynomial} of $K$ is
\[\Phi^{\phi}_X(K)=\sum_{f\in\mathrm{Hom}(\mathcal{PB}(K),X)} u^{BW(f)}\] or
\[\Phi^{\phi,s}_X(K)=\sum_{f\in\mathrm{Hom}(\mathcal{PB}(K),X)} u^{SBW(f)_0}v^{SBW(f)_1}\]
in the strongly compatible case.
\end{definition}

By construction, we have our main result:
\begin{proposition}
Let $X$ be a parity biquandle and $\phi$ a parity-enhanced cocycle. If
two virtual knots $K$ and $K'$ are related by virtual Reidemeister moves, then
\[\Phi^{\phi,M}_X(K)=\Phi^{\phi,M}_X(K') \quad \mathrm{and}\quad 
\Phi^{\phi}_X(K)=\Phi^{\phi}_X(K').\]
If $\phi^0$ and $\phi^1$ are strongly compatible, we have
\[\Phi^{\phi,sM}_X(K)=\Phi^{\phi,sM}_X(K') \quad \mathrm{and}\quad 
\Phi^{\phi,s}_X(K)=\Phi^{\phi,s}_X(K').\]
Therefore, all four are knot invariants.
\end{proposition}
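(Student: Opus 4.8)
My plan is to reduce the entire statement to the already-established invariance of the parity biquandle counting invariant. That theorem is proved move by move by exhibiting, for each generating virtual Reidemeister move relating $K$ and $K'$, a bijection $\Psi$ between $\mathrm{Hom}(\mathcal{PB}(K),X)$ and $\mathrm{Hom}(\mathcal{PB}(K'),X)$ that is the identity away from the disk in which the move occurs. I would use exactly this bijection and show the single local claim $BW(f)=BW(\Psi(f))$ for every coloring $f$ and every move (and $SBW(f)=SBW(\Psi(f))$ as an ordered pair in the strong case). Granting this, $\Psi$ carries the multiset $\Phi^{\phi,M}_X(K)=\{BW(f)\}$ bijectively and weight-preservingly onto $\Phi^{\phi,M}_X(K')$, so the two are equal as multisets; the polynomial $\Phi^{\phi}_X$ is just the generating function of this multiset and hence determined by it; and the identical reasoning applied to the ordered pair $SBW$ gives the two strong statements. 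Finally, since a virtual knot is by definition an equivalence class of diagrams under finitely many such moves, invariance under each single move upgrades to a genuine invariant.

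The work would then be entirely local, and I would treat the moves one at a time. Virtual crossings carry no cocycle weight and impose no coloring relations, so every purely virtual move and the detour (mixed) move leaves the classical crossings, their colors, and — by the foundational invariance of parity recalled in Section~\ref{I} — their parities untouched, hence leaves each summand of $BW(f)$ literally unchanged. For a classical type~I move the single new crossing is even and presents equal under- and over-crossing colors, so it contributes $\pm\phi^0(x,x)$, which vanishes because $\phi^0$ is reduced. For a classical type~II move the two crossings created or destroyed have equal parity $\epsilon$ and opposite sign, and the coloring rules feed the same arguments to $\phi^\epsilon$, so their contributions $+\phi^\epsilon(x,y)$ and $-\phi^\epsilon(x,y)$ cancel; because the two crossings share a parity, the cancellation happens inside whichever of $SBW(f)_0,SBW(f)_1$ they belong to, so the strong weight is preserved as well.

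The heart of the argument is the type~III move. Using the reduction already available in the biquandle discussion (type~I and~II moves let me assume all three crossings are positive, so every $\sigma(c)=+1$), the equality $BW(f)=BW(\Psi(f))$ becomes precisely an equality between the sum of the three crossing weights before and after the move. By Lemma~\ref{lemma:r3par} the three parities realize only the patterns $(a,b,c)\in\{(0,0,0),(1,1,0),(1,0,1),(0,1,1)\}$. In the all-even pattern the required identity is exactly the reduced $2$-cocycle condition $d^3\phi^0=0$ evaluated on the colors of the three semiarcs, which holds by hypothesis; in each mixed pattern the required identity is exactly the corresponding compatibility equation of the definition, once the before/after R3 colors are substituted for $x,y,z$. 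Thus compatibility delivers $BW(f)=BW(\Psi(f))$ in every case.

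For the strong invariants I must preserve the even and odd parts separately, and the only place they can interact is the mixed type~III patterns, where a single even crossing sits among two odd ones. There $SBW(f)_0$ changes only by (even weight after) $-$ (even weight before), i.e.\ by the difference of the two lone $\phi^0$-terms in the compatibility equation, and the three strong compatibility equations assert exactly that these two $\phi^0$-terms are equal in the patterns $(1,1,0),(1,0,1),(0,1,1)$ respectively; hence $SBW(f)_0$ is preserved, and since $BW(f)=SBW(f)_0+SBW(f)_1$ is already preserved, so is $SBW(f)_1$. The main obstacle I anticipate is the bookkeeping at the type~III move: I must fix one consistent labeling of the three semiarcs, read off the left-side under/over colors at each of the three crossings both before and after the move in each parity pattern, and check that these match the arguments in the (strong) compatibility equations term by term — the all-even case reproducing the ordinary cocycle condition and the mixed cases pinning down precisely the compatibility and strong compatibility hypotheses.
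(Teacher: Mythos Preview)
Your proposal is correct and follows essentially the same approach as the paper, which states the proposition ``by construction'' after laying out in the preceding paragraphs exactly the move-by-move argument you give: reduced $\phi^0$ for type~I, cancellation within a single parity for type~II, the $2$-cocycle condition plus compatibility for type~III via Lemma~\ref{lemma:r3par}, and strong compatibility to preserve the even and odd contributions separately. Your write-up is simply a more explicit and careful rendering of that same outline, with the virtual moves and the passage from single moves to the full equivalence relation made explicit.
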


We can conveniently
specify any pair $\phi^0,\phi^1:X\times X\to A$ with an $n\times 2n$
block matrix with entries in $A$ representing the coefficients of the 
characteristic maps $\chi_{x_i,x_j}$. For instance, if $X=\{x_1,x_2\}$ then
we use the matrix
\[\left[\begin{array}{rr|rr}
2 & 1 & 0 & 1 \\
0 & -1 & 1 & -2
\end{array}\right]\]
to indicate the maps $\phi^0=2\chi_{(x_1,x_1)}+\chi_{x_1,x_2}-\chi_{(x_2,x_2)}$
and $\phi^1=\chi_{(x_1,x_2)}+\chi_{x_2,x_1}-2\chi_{(x_2,x_2)}$.

\begin{remark}
If $\phi^0$ is a biquandle 2-cocycle and $\phi^1$ and $\psi^1$ are both strongly
compatible with $\phi^0$, then we note that $\phi^1+\psi^1$ and 
$\alpha\phi^1$ for $\alpha\in A$ are also strongly compatible with 
$\phi^0$. In particular, for each biquandle 2-cocycle, the set of strongly 
compatible maps has the structure of an $A$-module.
\end{remark}

Our first example illustrates the computation of the invariant and demonstrates
that the parity cocycle enhancement provides more information than the 
corresponding unenhanced biquandle counting invariant.

\begin{example}
Consider the parity biquandle $X$ with elements \{1,2,3\} and operation matrix
\[\left[\begin{array}{rrr|rrr}
3 & 1 & 3 & 3 & 1 & 3 \\
2 & 2 & 2 & 2 & 2 & 2 \\
1 & 3 & 1 & 1 & 3 & 1 \\ \hline
1 & 3 & 1 & 3 & 3 & 3 \\
2 & 2 & 2 & 2 & 2 & 2 \\
3 & 1 & 3 & 1 & 1 & 1
\end{array}\right].\]
Our \texttt{python} searches reveal that $X$ has strongly compatible
parity enhanced cocycles over $A=\mathbb{Z}_5$, including
\[\phi=\left[\begin{array}{rrr|rrr}
0 & 0 & 0 & 0 & 2 & 0 \\
2 & 0 & 2 & 2 & 3 & 2 \\
0 & 0 & 0 & 0 & 2 & 0
\end{array}\right].\]
Then the virtual trefoil knot $2.1$ has three $X$-colorings with Boltzmann 
weights as depicted: 
\[\includegraphics{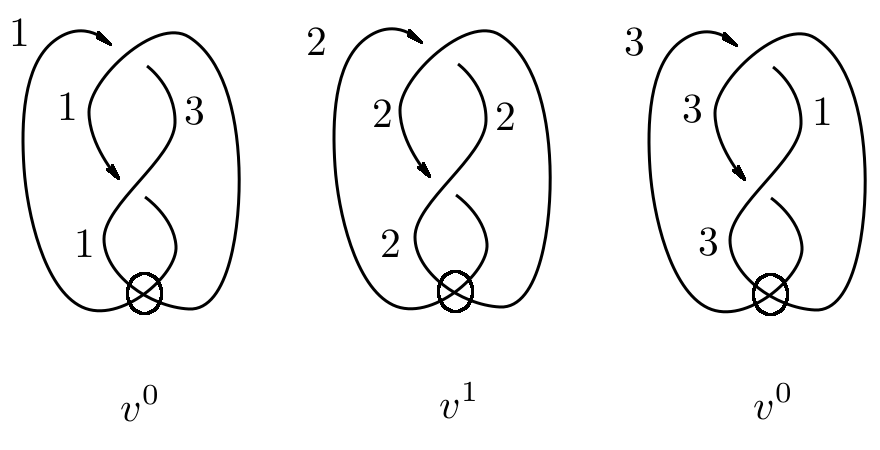}\]
yielding a parity-enhanced biquandle cocycle invariant value of 
$\Phi^{\phi}_X(2.1)=2+v$, distinguishing it from the unknot with 
$\Phi^{\phi}_X(0.1)=3u^0=3$.
On the other hand, the corresponding non-parity biquandle cocycle invariant 
(treating all crossings as even) has value $3u^0=3$ for both $2.1$ and 
the unknot.
\end{example}

For our next example, we chose a four-element biquandle and strongly
compatible parity enhanced cocycle over $A=\mathbb{Z}_3$ and computed the
invariant for all prime virtual knots with up to four classical crossings
as listed in the knot atlas \cite{KA}.

\begin{example}
Let $X$ be the parity biquandle with operation matrix
\[\left[\begin{array}{rrrr|rrrr}
3 & 4 & 2 & 1 & 3 & 4 & 2 & 1\\
1 & 2 & 4 & 3 & 1 & 2 & 4 & 3\\
4 & 3 & 1 & 2 & 4 & 3 & 1 & 2\\
2 & 1 & 3 & 4 & 2 & 1 & 3 & 4\\ \hline
1 & 3 & 1 & 3 & 1 & 3 & 1 & 3\\
2 & 4 & 2 & 4 & 2 & 4 & 2 & 4\\
3 & 1 & 3 & 1 & 3 & 1 & 3 & 1\\
4 & 2 & 4 & 2 & 4 & 2 & 4 & 2 
\end{array}\right]\]
and $\phi$ the strongly compatible parity enhanced cocycle over 
$\mathbb{Z}_3$ with matrix
\[\left[\begin{array}{rrrr|rrrr}
0 & 2 & 2 & 1 & 1 & 1 & 1 & 1\\
2 & 0 & 1 & 2 & 1 & 1 & 1 & 1\\
2 & 1 & 0 & 2 & 1 & 1 & 1 & 1\\
1 & 2 & 2 & 0 & 1 & 1 & 1 & 1
\end{array}\right].\]
Then our \texttt{python} computations reveal values of the two-variable
 parity cocycle invariant $\Phi^{\phi,s}_X$ for 
the prime virtual knots with up to 4 classical crossings as listed in the table
(numbered as in the knot atlas \cite{KA}).
The double lines divide the table by parity biquandle
counting invariant value $\Phi^{\mathbb{Z}}_X$ and the single
lines divide the table by single-variable parity cocycle enhancement value
$\Phi^{\phi}_X$.
\[
\begin{array}{r|l}
\Phi^{\phi}_X(K) & K \\ \hline 
4 & 3.1, 3.5, 3.6, 3.7, 4.2, 4.6, 4.8, 4.12, 4.13, 4.17, 4.19, 4.26, 4.46, 4.47, 4.51, 4.55, 4.56, 4.75, 4.76, \\
 & 4.77, 4.86, 4.93, 4.96, 4.97, 4.99, 4.102, 4.103, 4.105, 4.106, 4.108\\ 
4u^2v & 4.29, 4.37, 4.61, 4.69 \\ \hline
4u & 4.36, 4.68 \\
4v & 3.2, 3.3, 3.4, 4.4, 4.5, 4.11, 4.18, 4.27, 4.44, 4.45, 4.49, 4.54, 4.74, 4.81, 4.82, 4.83, 4.87, 4.92,\\
 &  4.94, 4.95, 4.101 \\ \hline
4u^2 & 4.31, 4.41, 4.57, 4.65, 4.70 \\
4uv & 4.34, 4.40, 4.60, 4.64 \\
4v^2 & 2.1, 4.1, 4.3, 4.7, 4.25, 4.28, 4.43, 4.53, 4.73, 4.80, 4.84, 4.88, 4.91, 4.100, 4.104 \\ \hline\hline

8 & 4.10, 4.16, 4.21, 4.23, 4.24, 4.50, 4.79 \\ \hline
8v & 4.9, 4.14, 4.15, 4.20, 4.22, 4.48, 4.52, 4.78 \\ \hline
4u^2+4u & 4.32, 4.35, 4.42, 4.58, 4.59, 4.66, 4.67, 4.71, 4.72 \\ \hline
4u^2v+4uv & 4.30, 4.33, 4.38, 4.39, 4.62, 4.63 \\ \hline \hline

16 & 4.90, 4.98 \\ \hline
4u^2+12 & 4.89 \\ \hline
4u^2+4u+8 & 4.107 \\ \hline
8u^2+4u+4 & 4.85 \\
\end{array}\]
\end{example}

\begin{example}
For any parity biquandle $X$ , the maps $\phi^0(x,y)=0$ 
and $\phi^1(x,y)=1$ for all $x,y\in X$ define a strongly compatible 
parity-enhanced cocycle. The resulting invariant $\Phi^{\phi,s}_X(X)$ has value
\[\Phi^{\phi,s}_X(K)=\Phi^{\mathbb{Z}}_X(K)v^{OW(K)}\]
where 
\[OW(K)=\sum_{c\ \mathrm{odd\ crossing}}\epsilon(c)\]
is the \textit{odd writhe} of $K$, the sum of crossing signs at odd crossings.
In particular, if $\Phi^{\phi}_X(K)\ne\Phi^{\mathbb{Z}}_X(K)$ then $K$ must be
non-classical. Moreover, this example shows that cohomologous cocycles
$\phi^0$ and $\psi^0$ need not define the same parity-enhanced invariant,
unlike the traditional case.
\end{example}

\section{\large\textbf{Questions}}\label{Q}

In what situations are two virtual knots distinguished by parity biquandle invariants but 
not by the corresponding biquandle invariants? In other words, what conditions 
are sufficient for the parity biquandle invariants to be stronger than
then their non-parity counterparts?

We would like to express the parity Boltzmann weight invariant in the language 
of cohomology. So far we don't have a satisfactory sense of ``parity'' for 
elements of
$C_3$, representing knotted surfaces, which leads to a useful boundary map. 
Specifically we would like a boundary map which yields 2-crossings of the 
correct parities. 
We also wonder about possible parity in $C_1$.

A related question is what the relationship is between $\phi^0$ and 
$\phi^1$ in general? As we've seen, the set of $\phi^1$ strongly
compatible with a given $\phi^0$ forms an $A$-module; what is the relationship
of these modules with $C^2(X;A)$? Is there some deeper homology theory, perhaps
with some ``parity grading'', from which the compatibility conditions emerge 
naturally?

\bibliography{sn-ls-rev}{}
\bibliographystyle{abbrv}

\bigskip

\noindent
\textsc{Department of Mathematics \\
3225 West Foster Avenue\\
Chicago, IL 60625-4895}

\medskip

\noindent
\textsc{Department of Mathematical Sciences \\
Claremont McKenna College \\
850 Columbia Ave. \\
Claremont, CA 91711}

\end{document}